\newcommand{\ds}{\displaystyle}
\newcommand{\en}{\selectlanguage{english}}
\newcommand{\NN}{\mathbb{N}}
\newcommand{\ZZ}{\mathbb{Z}}
\newcommand{\FF}{\mathbb{F}}
\newcommand{\be}{\begin{equation}}
\newcommand{\ee}{\end{equation}}
\newtheorem{thm}{Theorem}
\theoremstyle{definition}
\newtheorem{rmk}{Remark}
\newtheorem{dfn}{Definition}
\newtheorem{con}{Conjecture}
\begin{document} \en
\title{Three experimental pearls in Costas arrays}
\author{Konstantinos Drakakis\footnote{The author holds a Diploma in Electrical and Computer Engineering from NTUA, Athens, Greece, and a Ph.D. in Applied and Computational Mathematics from Princeton University, NJ, USA. He is also affiliated with Electronic \& Electrical Engineering, University College Dublin, Ireland.\newline  \textbf{Address:} Electronic \& Electrical Engineering, University College Dublin, Belfield, Dublin 4, Ireland\newline \textbf{Email:} Konstantinos.Drakakis@ucd.ie}
\\ Claude Shannon Institute\footnote{www.shannoninstitute.ie}\\Ireland}
\maketitle

\abstract{The results of 3 experiments in Costas arrays are presented, for which theoretical explanation is still not available: the number of dots on the main diagonal of exponential Welch arrays, the parity populations of Golomb arrays generated in fields of characteristic 2, and the maximal cross-correlation between pairs of Welch or Golomb arrays generated in fields of size equal to a Sophie Germain prime.}

\section{Introduction}

Costas arrays appeared for the first time in 1965 in the context of SONAR detection (\cite{C1}, and later \cite{C2} as a journal publication), when J. P. Costas, disappointed by the poor performance of SONARs, used them to describe a novel frequency hopping pattern for SONARs with optimal auto-correlation properties. At that stage their study was entirely empirical and application-oriented. In 1984, however, after the publication by S. Golomb \cite{G} of the 2 main construction methods for Costas arrays (the Welch and the Golomb algorithm) based on finite fields, still the only ones available today, they officially acquired their present name and they became an object of mathematical interest and study.

Soon it became clear that the mathematical problems related to Costas arrays presented a challenge for our present methodology in Discrete Mathematics (Combinatorics, Algebra, Number Theory etc.), and suggested that novel techniques are desperately needed, perhaps currently lying beyond the frontiers of our knowledge. Indeed, we have so far been unable to settle even the most fundamental question in the field: the issue of the existence of Costas arrays for all orders.

These insurmountable difficulties the researchers were faced with triggered inevitably an intense activity in computer exploration of Costas arrays (for example \cite{BREMW, BCGRMP, DGRT, SVM}), the rationale being that it is easier to prove something on which strong evidence has been gathered, rather than starting completely from scratch. Such evidence led to the formulation of conjectures, some of which subsequently were, at least partially, proved. These successes, however small, helped consolidate the position of the experimental method as an indispensable tool for the study of Costas arrays.

Not all computer experiments have led to conjectures, however, let alone successfully proved conjectures: several experiments, perhaps the most interesting ones, yielded results that still defy any attempt for explanation. In this work we collect our findings in 3 numerical experiments we performed on Costas arrays, whose results appear very interesting, but entirely inexplicable at present, and we present them to the broader scientific community, hoping to accelerate progress towards their solution. They are:
\begin{itemize}
  \item The number of dots on the main diagonal of exponential Welch arrays;
  \item The parity populations of Golomb arrays generated in fields of characteristic 2;
  \item and the maximal cross-correlation between pairs of Welch or Golomb arrays generated in fields of size equal to a Sophie Germain prime.
\end{itemize}

The reason for choosing these particular 3 experiments is that, having spent lots of time studying them, we can confidently say that a lot more is to be gained than mere deeper understanding of Costas arrays through their successful explanation: in our opinion, such an explanation relies on completely novel, as yet unexplored areas of finite fields, and traditional algebraic and number theoretic methods are totally incapable of making any progress. In other words, these problems, although originating in the relatively unknown field of Costas arrays, reveal new directions in Algebra and Number Theory, and are, consequently, of paramount pure mathematical interest.

\section{Basics}

In this section we give precise definitions for all the terms used in the introduction, as well as for everything else needed in the paper.

\subsection{Definition of the Costas property}

Simply put, a Costas array is a square arrangement of dots and blanks, such that there is exactly one dot per row and column, and such that all vectors between dots are distinct.

\begin{dfn}
Let $f:[n]\rightarrow [n]$, where $[n]=\{1,\ldots,n\}$, $n\in\NN$, be a bijection; then $f$ has the \emph{Costas property} iff the collection of vectors $\{(i-j,f(i)-f(j)): 1\leq j<i\leq n\}$, called \emph{the distance vectors}, are all distinct, in which case $f$ is called a \emph{Costas permutation}. The corresponding \emph{Costas array} $A_f$ is the square array $n\times n$ where the elements at $(f(i),i),\ i\in[n]$  are equal to 1 (dots), while the remaining elements are equal to 0 (blanks):
\[A_f=[a_{ij}]=\begin{cases} 1\text{ if } i=f(j)\\ 0\text{ otherwise}\end{cases},\ j\in[n]\]
\end{dfn}

\begin{rmk}\
The operations of horizontal flip, vertical flip, and transposition on a Costas array result to a Costas array as well: hence, out of a Costas array 8 can be created, or 4 if the particular Costas array is symmetric.
\end{rmk}

\subsection{Construction algorithms}

There are 2 known algorithms for the construction of Costas arrays. We state them below omitting the proofs (which can be found in \cite{D,G} in full detail):

\begin{dfn}[Exponential Welch construction $W_1(p,g,c)$] \label{w}
Let $p$ be a prime, $g$  a primitive root of the finite field $\FF(p)$, and $c\in[p-1]-1$; the \emph{exponential Welch permutation} corresponding to $g$ and $c$ is defined by $\ds f(i)=g^{i-1+c}\mod p,\ i\in[p-1]$.
\end{dfn}

\begin{rmk}
Given a $W_1$ permutation, it is well known that its horizontal and vertical flips also correspond to $W_1$ permutations; its transpose, however, does not: it is what we define as a \emph{logarithmic Welch permutation}. The distinction is well defined as, for $p>5$, there are no symmetric $W_2$ arrays. We will no further consider logarithmic Welch permutations in this work, so ``Welch'' will henceforth be synonymous to ``exponential Welch''.
\end{rmk}

\begin{dfn}[Golomb construction $G_2(p,m,a,b)$]
Let $q=p^m$, where $p$ prime and $m\in\NN^*$, and let $a$, $b$ be primitive roots of the finite field $\FF(q)$; the Golomb permutation corresponding to $a$ and $b$ is defined through the equation $a^i+b^{f(i)}=1,\ i\in[q-2]$.
\end{dfn}

\begin{rmk}
The horizontal and vertical flips of a $G_2$ permutation are themselves $G_2$ permutations, just like in the Welch case; this time, however, the same holds true for transpositions as well.
\end{rmk}

\begin{rmk}
The indices in $W_1$ and $G_2$ have the significance that the methods produce permutations of orders 1 and 2 smaller than the size of the finite field they use, respectively. It is well known that both methods can be extended to yield a wide range of sub-methods; in this paper, however, we will focus exclusively on the 2 aforementioned main construction methods.
\end{rmk}

\subsection{Cross-correlation}

We now give a precise definition of the cross-correlation between 2 Costas arrays:

\begin{dfn}
Let $f,g:[n]\rightarrow [n]$ where $n\in\NN^*$, and let $u,v\in\ZZ$; the cross-correlation between $f$ and $g$ at $(u,v)$ is defined as
\be \label{ccdef} \Psi_{f,g}(u,v)=|\{(f(i)+v,i+u):i\in[n]\}\cap \{(g(i),i):i\in[n]\}|\ee
\end{dfn}

Informally, we can think of the cross-correlation in the following way: first, we place the 2 Costas arrays on top of each other, and then we translate the first by $v$ units vertically and $u$ horizontally; the number of pairs of overlapping dots in this position is the value of the cross-correlation at $(u,v)$.

\begin{rmk}
If either one of the 2 ``+'' signs in (\ref{ccdef}) is interpreted as modulo addition, the cross-correlation becomes periodic in the corresponding direction. More precisely, in the treatment of the Welch case that is about to follow, it is natural to interpret $i+u$ as a modulo addition, so that the cross-correlation between $W_1$ arrays is periodic in the horizontal direction, while in the dovetailing treatment of the Golomb case it is natural to interpret both $i+u$ and $f(i)+v$ as modulo additions, so that the cross-correlation between $G_2$ arrays is periodic in both directions.
\end{rmk}

\subsection{Parity populations}

\begin{dfn}
Let $f:[n]\rightarrow [n]$, $n\in\NN^*$, be a function; set:
\begin{itemize}
  \item $ee(f)=|\{i\in[n]: i\mod 2=f(i)\mod 2=0\}|$ to be the \emph{even-even population};
  \item $oo(f)=|\{i\in[n]: i\mod 2=f(i)\mod 2=1\}|$ to be the \emph{odd-odd population};
  \item $eo(f)=|\{i\in[n]: i\mod 2=1,\ f(i)\mod 2=0\}|$ to be the \emph{even-odd population};
  \item $oe(f)=|\{i\in[n]: i\mod 2=0,\ f(i)\mod 2=1\}|$ to be the \emph{odd-even population};
\end{itemize}
\end{dfn}

If $f$ is a permutation, the parity populations are closely connected:

\begin{thm}
Let $f:[n]\rightarrow [n]$, $n\in\NN^*$, be a permutation; then
\begin{itemize}
  \item $ee(f)+oo(f)+eo(f)+oe(f)=n$;
  \item $oe(f)=eo(f)$;
  \item $oo(f)-ee(f)=n\mod 2$.
\end{itemize}
\end{thm}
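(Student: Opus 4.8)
The plan is to reduce all three identities to elementary double-counting, exploiting the single hypothesis available, namely that $f$ is a bijection. First I would record the parity census of the index set itself. Write $O=|\{i\in[n]:i\bmod 2=1\}|$ and $E=|\{i\in[n]:i\bmod 2=0\}|$ for the numbers of odd and even elements of $[n]$. A one-line count gives $O+E=n$ together with $O-E=n\bmod 2$: when $n$ is even the two parity classes have equal size, and when $n$ is odd there is exactly one extra odd element. These two elementary facts will furnish the right-hand sides of the first and third claims.

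Next I would partition both the domain and the range by parity and count each partition in the obvious way. Splitting the indices $i$ according to the parity of $i$ yields
\[
eo(f)+oo(f)=O,\qquad ee(f)+oe(f)=E,
\]
since every odd index contributes to exactly one of $eo,oo$ and every even index to exactly one of $ee,oe$. The key step is to split instead according to the parity of the \emph{value} $f(i)$: because $f$ is a bijection, as $i$ ranges over $[n]$ the value $f(i)$ ranges over all of $[n]$, so the number of indices with $f(i)$ odd is again $O$ and the number with $f(i)$ even is again $E$. This produces the complementary pair
\[
oo(f)+oe(f)=O,\qquad ee(f)+eo(f)=E.
\]

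With these four linear relations in hand the three assertions follow by pure bookkeeping. Adding the two domain equations gives $ee(f)+oo(f)+eo(f)+oe(f)=O+E=n$, the first claim. Comparing $eo(f)+oo(f)=O$ with $oo(f)+oe(f)=O$ cancels $oo(f)$ and yields $eo(f)=oe(f)$, the second claim. Finally, subtracting $ee(f)+eo(f)=E$ from $oo(f)+eo(f)=O$ cancels the common term $eo(f)$ and leaves $oo(f)-ee(f)=O-E=n\bmod 2$, the third claim.

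There is essentially no hard part here; the entire content is the observation in the second paragraph that bijectivity permits counting the same four quantities by range as well as by domain, producing a second, independent pair of constraints. I would simply take care to state the range count precisely, since it is the only place the permutation hypothesis enters: for a general function $f$ the value-parity totals need not equal $O$ and $E$, and indeed only the first identity — which never uses bijectivity — would survive.
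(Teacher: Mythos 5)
Your proof is correct and takes essentially the same approach as the paper's: both arguments double-count the four populations once by the parity of the index $i$ and once by the parity of the value $f(i)$, using bijectivity to identify the range counts with the numbers $O$ and $E$ of odd and even elements of $[n]$. Your version merely writes out all four linear relations explicitly where the paper compresses them into two sentences, and your closing remark about where bijectivity is (and is not) needed matches the structure of the paper's argument.
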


\begin{proof}
This is actually a very simple, almost obvious result. Clearly, $ee+eo=ee+oe$, as both sums equal the number of even integers in $[n]$; hence, $eo=oe$. Further, $oo+oe$ is the number of odd integers in $[n]$, whence:
\[oo+oe-(ee+eo)=oo-ee=\begin{cases} 1\text{ if }n\mod2\equiv1\\ 0\text{ if }n\mod2\equiv0=n\mod 2\end{cases}\]
\end{proof}

There is then only one degree of freedom: if one of the populations is given, all 4 can be determined.

\subsection{Sophie Germain primes}

A very special family of primes will play an important role in Section \ref{s3}:

\begin{dfn}
A prime $p$ is a \emph{Sophie Germain prime} \cite{S} iff $p=2q+1$, where $q$ prime.
\end{dfn}

It is not known whether this family contains infinitely many primes, although it is conjectured to do so.

\section{The number of dots on the main diagonal of exponential Welch arrays}\label{s1}

In accordance with Definition \ref{w}, given a prime $p$, we are interested in the number of solutions of
\be\label{bwed} i\equiv g^{i-1+c}\mod p\ee
with respect to $i$, where $g$ is a primitive root of the field $\FF(p)$ and $c\in[p-1]-1$ is a constant.

Equation (\ref{bwed}) strikes one immediately as ``unalgebraic'': the $i$ on the RHS is simply an index, and in particular an integer in $[p-1]-1$, based on Fermat's Little Theorem; the $i$ on the LHS, however, is an element of $\FF(p)$, and elements of $\FF(p)$ just happen to be representable by integers because $\FF(p)$ is a field of prime size and not an extension field (whose elements are routinely represented as polynomials). In other words, Algebra traditionally considers the 2 instances of $i$ in (\ref{bwed}) as different, non-comparable objects, and these 2 object types happen to coincide in finite fields of prime size; the solution of this equation then needs to exploit properties of these fields not present in extension fields, where this equation is impossible to formulate in the first place, and this probably means that we need to consider $\FF(p)$ as something more complex than a field.

The bottom line is that we are left with a transcendental equation over a finite field. Such equations have almost not been studied at all, as opposed to polynomial equations, on which the literature is abundant. The only instance of a relevant problem studied in the literature (that we have been able to trace) has been one proposed by Demetrios Brizolis: is it true that $\ds \forall i\in[p-1]\ \exists g\in[p-1]:\ i\equiv g^i\mod p$? This was answered in the affirmative by W. P. Zhang \cite{Z} for sufficiently large primes, and later C. Pomerance and M. Campbell ``\emph{made the value of ``sufficiently large'' small enough that they were able to use a direct search to affirmatively answer Brizolis' original question}'' (\cite{HM} and references therein). Observe, though, that this is quite a different problem than the one we are interested in.

Let $\ds S(p,g,c)=\left|\left\{i\in[p-1]:i\equiv g^{i-1+c}\right\}\right|$, namely the number of solutions of (\ref{bwed}) for a given constant $c$ and a primitive root $g\in\FF(p)$, $p$ prime. Table \ref{t1} shows $\ds \max_{(g,c)} S(p,g,c)$ for all $p<5000$: the data do not seem to follow a recognizable pattern, but they roughly seem to behave ``logarithmically''\footnote{A graph of these results for $p<1000$ was presented in \cite{DGO}}. Indeed, $1+[\ln(p)]$, where $[\cdot]$ is the rounding function, seems to fit the data very well: 402 out of 669 entries (60.1\%) are captured exactly, while 652 entries (97.5\%) are captured within an error margin of $\pm 1$.

Finally, here is an interesting additional side observation we made during our experiments: it is a well known result in Combinatorics (``The problem of the misadressed letters'') that the ratio of permutations of order $n$ without fixed points over the total $n!$ permutations approaches $e^{-1}=0.3678794\ldots$ as $n\rightarrow \infty$. What can be said about the ratio of the population of $W_1$ permutations with no fixed points at all generated in $\FF(p)$  over the totality of $(p-1)\phi(p-1)$ $W_1$ arrays? It is plotted in Fig. \ref{fg} and seems to approach $e^{-1}$ as well, although the data shows still some fluctuation in the given range of $p$.

\begin{figure}
\centering
\includegraphics[height=250pt]{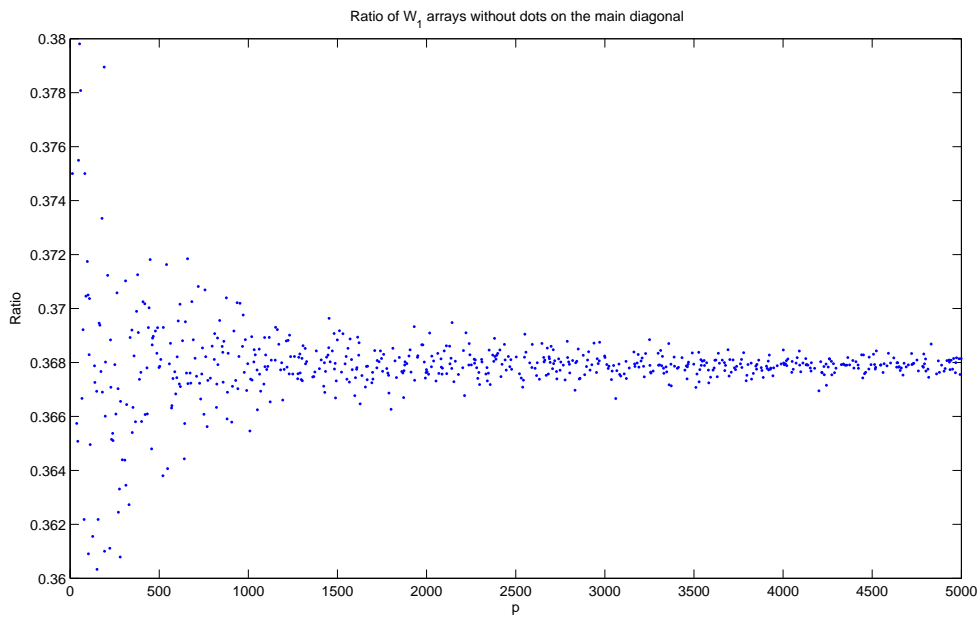}
\caption{\label{fg} Plot of the ratio of $W_1$ arrays with no dots on the main diagonal over the total number of $W_1$ arrays generated in $\FF(p)$ as a function of $p$}
\end{figure}

\begin{table}

{\scriptsize

\begin{tabular}{|r|r||r|r||r|r||r|r||r|r||r|r||r|r||r|r||r|r||r|r|}
\hline
$p$ & $\#$ & $p$ & $\#$ & $p$ & $\#$ & $p$ & $\#$ & $p$ & $\#$ & $p$ & $\#$ & $p$ & $\#$ & $p$ & $\#$ & $p$ & $\#$ & $p$ & $\#$\\\hline\hline
2&1&337&6&761&7&1231&7&1723&9&2267&9&2767&8&3331&8&3877&9&4447&9\\\hline
3&2&347&6&769&8&1237&7&1733&8&2269&10&2777&9&3343&8&3881&9&4451&10\\\hline
5&2&349&7&773&9&1249&8&1741&8&2273&10&2789&8&3347&9&3889&10&4457&9\\\hline
7&3&353&8&787&8&1259&8&1747&9&2281&8&2791&9&3359&9&3907&11&4463&9\\\hline
11&4&359&7&797&7&1277&8&1753&9&2287&9&2797&8&3361&8&3911&9&4481&10\\\hline
13&4&367&8&809&8&1279&8&1759&9&2293&9&2801&9&3371&9&3917&9&4483&9\\\hline
17&3&373&7&811&8&1283&8&1777&9&2297&9&2803&9&3373&11&3919&9&4493&9\\\hline
19&5&379&7&821&9&1289&9&1783&8&2309&9&2819&10&3389&9&3923&10&4507&9\\\hline
23&5&383&7&823&7&1291&7&1787&9&2311&9&2833&9&3391&10&3929&9&4513&10\\\hline
29&4&389&7&827&7&1297&9&1789&8&2333&9&2837&9&3407&10&3931&8&4517&9\\\hline
31&4&397&7&829&9&1301&8&1801&9&2339&9&2843&9&3413&9&3943&9&4519&9\\\hline
37&4&401&7&839&8&1303&7&1811&9&2341&9&2851&8&3433&9&3947&10&4523&9\\\hline
41&5&409&7&853&8&1307&8&1823&8&2347&9&2857&8&3449&9&3967&9&4547&9\\\hline
43&4&419&7&857&8&1319&9&1831&8&2351&10&2861&8&3457&10&3989&10&4549&9\\\hline
47&5&421&6&859&8&1321&8&1847&8&2357&8&2879&8&3461&9&4001&10&4561&9\\\hline
53&5&431&7&863&8&1327&8&1861&9&2371&8&2887&9&3463&9&4003&9&4567&10\\\hline
59&5&433&7&877&7&1361&8&1867&9&2377&9&2897&9&3467&9&4007&10&4583&9\\\hline
61&5&439&8&881&8&1367&8&1871&8&2381&9&2903&8&3469&8&4013&10&4591&9\\\hline
67&5&443&7&883&8&1373&9&1873&9&2383&9&2909&9&3491&10&4019&9&4597&9\\\hline
71&5&449&7&887&8&1381&9&1877&9&2389&8&2917&8&3499&9&4021&9&4603&9\\\hline
73&5&457&7&907&7&1399&7&1879&8&2393&8&2927&9&3511&9&4027&9&4621&9\\\hline
79&5&461&8&911&7&1409&8&1889&8&2399&9&2939&9&3517&10&4049&9&4637&9\\\hline
83&6&463&7&919&9&1423&7&1901&10&2411&9&2953&9&3527&9&4051&9&4639&9\\\hline
89&5&467&7&929&8&1427&9&1907&8&2417&10&2957&9&3529&9&4057&9&4643&10\\\hline
97&6&479&8&937&8&1429&8&1913&8&2423&11&2963&9&3533&9&4073&10&4649&10\\\hline
101&6&487&8&941&8&1433&8&1931&9&2437&10&2969&9&3539&9&4079&9&4651&9\\\hline
103&6&491&7&947&8&1439&8&1933&8&2441&8&2971&9&3541&9&4091&10&4657&9\\\hline
107&6&499&7&953&9&1447&9&1949&9&2447&9&2999&9&3547&8&4093&10&4663&9\\\hline
109&6&503&7&967&8&1451&9&1951&8&2459&8&3001&9&3557&9&4099&9&4673&9\\\hline
113&5&509&7&971&8&1453&9&1973&10&2467&8&3011&9&3559&9&4111&10&4679&9\\\hline
127&5&521&7&977&8&1459&8&1979&10&2473&9&3019&9&3571&9&4127&10&4691&9\\\hline
131&6&523&8&983&9&1471&8&1987&8&2477&9&3023&11&3581&9&4129&10&4703&9\\\hline
137&6&541&7&991&9&1481&8&1993&9&2503&9&3037&8&3583&9&4133&9&4721&10\\\hline
139&6&547&7&997&10&1483&8&1997&9&2521&9&3041&9&3593&9&4139&9&4723&10\\\hline
149&6&557&7&1009&7&1487&8&1999&8&2531&9&3049&9&3607&10&4153&9&4729&9\\\hline
151&5&563&8&1013&8&1489&8&2003&8&2539&9&3061&9&3613&9&4157&9&4733&9\\\hline
157&5&569&8&1019&8&1493&9&2011&9&2543&9&3067&9&3617&9&4159&10&4751&10\\\hline
163&6&571&7&1021&7&1499&8&2017&9&2549&9&3079&9&3623&11&4177&9&4759&9\\\hline
167&7&577&7&1031&8&1511&8&2027&9&2551&8&3083&9&3631&9&4201&9&4783&10\\\hline
173&6&587&8&1033&8&1523&10&2029&9&2557&9&3089&9&3637&9&4211&9&4787&11\\\hline
179&7&593&7&1039&8&1531&8&2039&9&2579&10&3109&9&3643&10&4217&9&4789&10\\\hline
181&5&599&7&1049&8&1543&8&2053&9&2591&9&3119&9&3659&10&4219&9&4793&10\\\hline
191&6&601&7&1051&8&1549&8&2063&10&2593&9&3121&9&3671&9&4229&10&4799&9\\\hline
193&6&607&8&1061&8&1553&8&2069&9&2609&9&3137&9&3673&9&4231&9&4801&9\\\hline
197&8&613&8&1063&8&1559&9&2081&9&2617&10&3163&8&3677&9&4241&9&4813&9\\\hline
199&6&617&8&1069&7&1567&9&2083&9&2621&8&3167&10&3691&9&4243&11&4817&10\\\hline
211&6&619&8&1087&7&1571&9&2087&9&2633&10&3169&9&3697&9&4253&11&4831&10\\\hline
223&7&631&7&1091&8&1579&8&2089&8&2647&9&3181&9&3701&8&4259&11&4861&10\\\hline
227&6&641&7&1093&7&1583&9&2099&9&2657&9&3187&9&3709&9&4261&9&4871&9\\\hline
229&5&643&8&1097&8&1597&8&2111&8&2659&9&3191&10&3719&9&4271&9&4877&10\\\hline
233&6&647&9&1103&8&1601&8&2113&9&2663&9&3203&10&3727&8&4273&9&4889&11\\\hline
239&8&653&7&1109&8&1607&8&2129&9&2671&8&3209&8&3733&9&4283&10&4903&9\\\hline
241&7&659&7&1117&7&1609&9&2131&9&2677&9&3217&10&3739&9&4289&9&4909&8\\\hline
251&6&661&7&1123&8&1613&8&2137&8&2683&9&3221&10&3761&9&4297&9&4919&10\\\hline
257&7&673&7&1129&8&1619&9&2141&9&2687&9&3229&9&3767&9&4327&9&4931&8\\\hline
263&6&677&9&1151&7&1621&9&2143&8&2689&9&3251&9&3769&10&4337&9&4933&9\\\hline
269&7&683&7&1153&8&1627&8&2153&9&2693&9&3253&9&3779&9&4339&8&4937&9\\\hline
271&6&691&7&1163&8&1637&9&2161&8&2699&9&3257&10&3793&8&4349&9&4943&9\\\hline
277&6&701&7&1171&9&1657&8&2179&8&2707&8&3259&9&3797&9&4357&9&4951&9\\\hline
281&7&709&8&1181&8&1663&8&2203&9&2711&9&3271&9&3803&9&4363&9&4957&9\\\hline
283&6&719&7&1187&9&1667&8&2207&9&2713&9&3299&8&3821&10&4373&10&4967&9\\\hline
293&7&727&8&1193&8&1669&9&2213&10&2719&8&3301&9&3823&9&4391&11&4969&9\\\hline
307&7&733&8&1201&7&1693&8&2221&9&2729&9&3307&9&3833&10&4397&9&4973&10\\\hline
311&6&739&8&1213&8&1697&8&2237&9&2731&9&3313&9&3847&10&4409&9&4987&9\\\hline
313&7&743&7&1217&8&1699&8&2239&8&2741&9&3319&8&3851&10&4421&9&4993&9\\\hline
317&6&751&7&1223&8&1709&8&2243&10&2749&9&3323&8&3853&9&4423&9&4999&9\\\hline
331&6&757&8&1229&8&1721&8&2251&8&2753&8&3329&10&3863&10&4441&10&&\\\hline
\end{tabular}
}
\caption{\label{t1} The maximum number of solutions of the equation $\ds i\equiv g^{i-1+c}\mod p$ over all possible values of $c$ and primitive roots $g\in\FF(p)$, $p<5000$.}
\end{table}

\section{The parity populations of Golomb arrays generated in fields of characteristic 2}

The parity populations for both $W_1$ and $G_2$ arrays generated in fields of odd characteristic have already been completely described \cite{DGR}:

\begin{thm} Let a permutation be generated by $G_2(p,m,a,b)$, $p>2$, $q=p^m$. Then:
\begin{itemize}
	\item If $\ds q\equiv 1\mod 4\Rightarrow ee=\frac{q-5}{4},\ eo=oe=oo=\frac{q-1}{4}$;
	\item If $\ds q\equiv 3\mod 4\Rightarrow oo=\frac{q+1}{4},\ eo=oe=ee=\frac{q-3}{4}$.
\end{itemize}
\end{thm}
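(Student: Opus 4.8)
The plan is to convert the two parity conditions into quadratic-residue conditions and then evaluate the resulting character sums. Since $a$ and $b$ are primitive roots of $\FF(q)$ and $q-1$ is even, the nonzero squares of $\FF(q)$ are exactly the even powers of a primitive root; hence for $i\in[q-2]$ the integer $i$ is even iff $a^i$ is a quadratic residue, and likewise $f(i)$ is even iff $b^{f(i)}$ is a quadratic residue (there is no wraparound issue because $0\le i\le q-2<q-1$). Writing $x=a^i$, as $i$ runs over $[q-2]$ the element $x$ runs exactly once over $\FF(q)^*\setminus\{1\}$, i.e. over all $x\neq0,1$, and the defining relation $a^i+b^{f(i)}=1$ becomes $b^{f(i)}=1-x$. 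Thus each parity population equals the number of $x\in\FF(q)$ with $x\neq0,1$ and a prescribed quadratic-residue status of $x$ and of $1-x$: $ee$ counts $x,1-x$ both residues, $oo$ both nonresidues, $eo$ (the case $i$ odd, $f(i)$ even) counts $x$ nonresidue and $1-x$ residue, and $oe$ the opposite.

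Next I would introduce the quadratic character $\chi$ of $\FF(q)$ and use the indicators $\tfrac12(1\pm\chi)$. For instance,
\[ ee=\frac14\sum_{\substack{x\in\FF(q)\\ x\neq0,1}}\bigl(1+\chi(x)\bigr)\bigl(1+\chi(1-x)\bigr), \]
with the three analogous products $\bigl(1\mp\chi(x)\bigr)\bigl(1\mp\chi(1-x)\bigr)$ giving $oo,eo,oe$. Expanding, every population is $\tfrac14$ of a linear combination of the four sums $q-2$, $S_1=\sum_{x\neq0,1}\chi(x)$, $S_2=\sum_{x\neq0,1}\chi(1-x)$, and $J=\sum_{x\neq0,1}\chi(x)\chi(1-x)$. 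The linear sums are immediate: since $\sum_{x\neq0}\chi(x)=0$ we get $S_1=-\chi(1)=-1$, and the substitution $x\mapsto1-x$ gives $S_2=S_1=-1$.

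The crux is the quadratic sum $J=\sum_{x}\chi\bigl(x(1-x)\bigr)$ (the excluded terms $x=0,1$ vanish), which is the Jacobi sum $J(\chi,\chi)$. Completing the square, $x(1-x)=\tfrac14\bigl(1-(2x-1)^2\bigr)$, so with $v=2x-1$ a bijection of $\FF(q)$ and $\chi(\tfrac14)=1$ one finds $J=\sum_v\chi(1-v^2)=-\chi(-1)$ by the standard evaluation of $\sum_v\chi(av^2+bv+c)$ for a quadratic of nonzero discriminant. As $\chi(-1)=1$ exactly when $q\equiv1\bmod4$, this yields $J=-1$ for $q\equiv1\bmod4$ and $J=+1$ for $q\equiv3\bmod4$.

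Substituting $S_1=S_2=-1$ together with the two values of $J$ then collapses the four linear combinations to the claimed numbers, and I would verify as a sanity check that they sum to $q-2$ and satisfy $oe=eo$ and $oo-ee=J(\chi,\chi)$-dependent as forced by the parity-population theorem above. I expect the only real ingredient to be the Jacobi sum evaluation $J(\chi,\chi)=-\chi(-1)$; everything else is bookkeeping. The one place demanding care is the boundary correction coming from the omitted value $x=1$ (equivalently the term $\chi(1)=1$): it is precisely this $-1$ in $S_1,S_2$ that separates $ee=\tfrac{q-5}{4}$ from the common value $\tfrac{q-1}{4}$ in the $q\equiv1\bmod4$ case, and symmetrically singles out $oo=\tfrac{q+1}{4}$ when $q\equiv3\bmod4$.
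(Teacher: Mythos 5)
Your proposal is correct and complete: the identification of the parity of $i$ and $f(i)$ with the quadratic-residue status of $a^i=x$ and $b^{f(i)}=1-x$, the character-sum expansion over $x\neq 0,1$, the values $S_1=S_2=-1$, and the Jacobi-sum evaluation $J(\chi,\chi)=-\chi(-1)$ all check out and reproduce exactly the four claimed populations in both congruence classes of $q$. The paper itself omits the proof (deferring to the reference \cite{DGR}), so there is no in-text argument to compare against, but your reduction to counting the residue patterns of the pairs $(x,1-x)$ is the standard route for this statement and there is nothing to add or correct.
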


\begin{thm} Let  permutation be generated by $W_1(p,g,0)$. Then:
\begin{itemize}
\item If $p\equiv 1\mod4\Rightarrow ee=oo=eo=oe$;
\item If $p\equiv 3\mod8$, then $eo-ee=-3h(-p)$;
\item If $p\equiv 7\mod8$, then $eo-ee=h(-p)$,
\end{itemize}
where $h(-p)$ is the \emph{Class Number for discriminant $-p$}.
\end{thm}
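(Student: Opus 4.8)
The plan is to turn the four parity counts into a single character sum and then to evaluate that sum. Since $c=0$, the permutation is $f(i)=g^{i-1}\bmod p$ for $i\in[p-1]$; writing $j=i-1$ and $x=g^{j}$, the variable $x$ runs exactly once over $\FF(p)^{*}=\{1,\dots,p-1\}$. Two parities govern the populations: the parity of $i$, which equals the parity of $j+1$, and the parity of $x$ as an integer in $\{1,\dots,p-1\}$. The key elementary observation is that, because $g$ is a primitive root and hence a quadratic non-residue, $(-1)^{j}=\left(\frac{g^{j}}{p}\right)=\left(\frac{x}{p}\right)$, so the parity of $j$ (equivalently of $i$) is recorded by the Legendre symbol $\chi(x)=\left(\frac{x}{p}\right)$. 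I would therefore encode ``$j$ even'' by $\tfrac{1+\chi(x)}{2}$, ``$j$ odd'' by $\tfrac{1-\chi(x)}{2}$, and ``$x$ even/odd'' by $\tfrac{1\pm(-1)^{x}}{2}$.

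First I would write each of $ee,oo,eo,oe$ as a sum over $x\in\{1,\dots,p-1\}$ of the product of the appropriate two indicators, expand the products, and use the two vanishing sums $\sum_{x}\chi(x)=0$ and $\sum_{x=1}^{p-1}(-1)^{x}=0$. Everything then collapses onto the single mixed sum
\be\label{Sdef} S=\sum_{x=1}^{p-1}\left(\frac{x}{p}\right)(-1)^{x},\ee
yielding $ee=oo=\frac{p-1-S}{4}$ and $eo=oe=\frac{p-1+S}{4}$; in particular $eo-ee=\frac{S}{2}$. This already explains why only the combination $eo-ee$ appears in the statement and why $oe=eo$, $ee=oo$ throughout, consistent with the general parity theorem proved above (here $n=p-1$ is even).

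The whole problem is thus reduced to evaluating $S$ in (\ref{Sdef}). Using $\sum_{x}\chi(x)=0$ to replace the odd-$x$ part by minus the even-$x$ part, I would fold $S$ onto the even residues and factor out $\left(\frac{2}{p}\right)$, obtaining $S=2\left(\frac{2}{p}\right)T$ with $T=\sum_{k=1}^{(p-1)/2}\left(\frac{k}{p}\right)$. For $p\equiv1\bmod4$ the involution $k\mapsto p-k$ together with $\chi(-1)=1$ forces $T=0$, hence $S=0$ and $ee=oo=eo=oe=\frac{p-1}{4}$, which is the first bullet. For $p\equiv3\bmod4$ I would invoke Dirichlet's class number formula in its finite form, namely $T=\bigl(2-\left(\frac{2}{p}\right)\bigr)h(-p)$, so that $eo-ee=\frac{S}{2}=\left(\frac{2}{p}\right)T=\left(\frac{2}{p}\right)\bigl(2-\left(\frac{2}{p}\right)\bigr)h(-p)$. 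Splitting according to $p\bmod8$, where $\left(\frac{2}{p}\right)=-1$ for $p\equiv3$ and $\left(\frac{2}{p}\right)=+1$ for $p\equiv7$, then gives $eo-ee=-3h(-p)$ and $eo-ee=h(-p)$ respectively.

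The genuinely non-elementary ingredient, and the main obstacle, is the evaluation of $T$ for $p\equiv3\bmod4$: the identification of the half-interval Legendre sum with the class number is exactly Dirichlet's class number formula for the imaginary quadratic field $\QQ(\sqrt{-p})$ (equivalently the value $L(1,\chi)$ of the associated $L$-function), so $h(-p)$ enters through analytic number theory rather than through any manipulation internal to Costas arrays. Everything else is bookkeeping with the character indicators, and the only points requiring a little care are the correspondence between the parities of $i$, of $j$, and of $\chi(x)$, and the constant $2-\left(\frac{2}{p}\right)$ produced by the factor $\left(\frac{2}{p}\right)$ in the folding of $S$.
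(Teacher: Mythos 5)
The paper does not actually prove this theorem---it explicitly omits the proof and defers to the reference \cite{DGR}---so there is no internal argument to compare against; judged on its own, your proposal is correct and is surely the natural route given that $h(-p)$ and the constants $-3$ and $1$ appear in the statement. Your reductions check out: with $x=g^{j}$, $j=i-1$, one has $(-1)^{j}=\left(\frac{x}{p}\right)$, the four populations collapse to $ee=oo=\frac{p-1-S}{4}$ and $eo=oe=\frac{p-1+S}{4}$ with $S=\sum_{x=1}^{p-1}\left(\frac{x}{p}\right)(-1)^{x}$, the folding gives $S=2\left(\frac{2}{p}\right)T$ with $T=\sum_{k<p/2}\left(\frac{k}{p}\right)$, and Dirichlet's formula $T=\bigl(2-\left(\frac{2}{p}\right)\bigr)h(-p)$ yields $eo-ee=\bigl(2\left(\frac{2}{p}\right)-1\bigr)h(-p)$, i.e.\ $-3h(-p)$ or $h(-p)$ according to $p\bmod 8$ (I verified $p=11$: $ee=oo=4$, $eo=oe=1$, $eo-ee=-3$). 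The only caveat worth recording is that the class number formula you invoke, like the one quoted in the paper, requires $p>3$; the case $p=3$ genuinely violates the second bullet ($eo-ee=-1\neq-3$), so that hypothesis should be made explicit.
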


For $p>3$, $\ds h(-p)=-\frac{1}{p}\sum_{i=1}^{p-1}\left(\frac{i}{p}\right)i$, where $\left(\frac{\cdot}{\cdot}\right)$ denotes the Legendre symbol \cite{S}.

Although the proofs (omitted here, but see \cite{DGR} for details) are not necessarily easy (in particular the parity populations of Welch arrays involve the quite advanced concept of the Class Number \cite{BS}), the statements certainly are: the parity populations of $G_2$ arrays generated in $\ds \FF(p^m)$, $p>2$, are independent of the primitive roots $a$ and $b$ used. The same holds essentially true for $W_1$ arrays, except that changing the value of $c$ by 1 causes $ee$ and $eo$ to swap values; as $W_1$ arrays are of even order, horizontal or vertical flips have the same effect, changing the parity of the corresponding coordinate of the dots.

This uniformity holds no longer true for $G_2$ arrays generated in fields of characteristic 2: here, the parity populations can take many different values, which appear to follow no readily recognizable pattern. As these arrays have even order, however, the same phenomenon that we observed in $W_1$ arrays applies here: for each array with parity populations $ee$ and $eo$, there exists another (its horizontal and vertical flip) with these values swapped; hence, there as many arrays with $ee=x$ and $eo=y$ as with $ee=y$ and $eo=x$. The different parity populations observed in $G_2$ arrays generated in the fields of size $2^m,\ m=3,\ldots,11$ are shown in detail in Table \ref{t2}; due to the symmetry we just mentioned, only (the top) half of the array is shown.

Table \ref{t2} shows only the simplest instance of a general phenomenon: consider $k\in\NN^*$ and consider the generalized parity populations modulo $k$. If $k$ happens to be a prime, then the $G_2$ arrays generated in fields of characteristic $k$ exhibit similar behavior. Clearly, Table \ref{t2} corresponds to the first case $k=2$. As we have not experimented extensively with $k>2$, however, we avoid presenting any results at this time.

\begin{table}

\begin{tabular}{ccccc}

\begin{tabular}{c}

\begin{tabular}{|r|r|r|}
\hline
\multicolumn{3}{|c|}{$m=3$}\\\hline
$ee$ & $eo$ & \#\\\hline
1&2&6\\\hline
\end{tabular}
\smallskip
\\
\begin{tabular}{|r|r|r|}
\hline
\multicolumn{3}{|c|}{$m=4$}\\\hline
$ee$ & $eo$ & \#\\\hline
2&5&4\\\hline
3&4&4\\\hline
\end{tabular}
\smallskip
\\
\begin{tabular}{|r|r|r|}
\hline
\multicolumn{3}{|c|}{$m=5$}\\\hline
$ee$ & $eo$ & \#\\\hline
5&10&10\\\hline
6&9&40\\\hline
7&8&40\\\hline
\end{tabular}
\smallskip
\\
\begin{tabular}{|r|r|r|}
\hline
\multicolumn{3}{|c|}{$m=6$}\\\hline
$ee$ & $eo$ & \#\\\hline
12&19&12\\\hline
13&18&22\\\hline
14&17&54\\\hline
15&16&20\\\hline
\end{tabular}
\smallskip
\\
\begin{tabular}{|r|r|r|}
\hline
\multicolumn{3}{|c|}{$m=7$}\\\hline
$ee$ & $eo$ & \#\\\hline
24&39&4\\\hline
25&38&20\\\hline
26&37&44\\\hline
27&36&104\\\hline
28&35&140\\\hline
29&34&206\\\hline
30&33&336\\\hline
31&32&280\\\hline
\end{tabular}
\smallskip
\\
\begin{tabular}{|r|r|r|}
\hline
\multicolumn{3}{|c|}{$m=8$}\\\hline
$ee$ & $eo$ & \#\\\hline
53&74&10\\\hline
54&73&4\\\hline
55&72&12\\\hline
56&71&36\\\hline
57&70&62\\\hline
58&69&106\\\hline
59&68&156\\\hline
60&67&116\\\hline
61&66&166\\\hline
62&65&178\\\hline
63&64&178\\\hline
\end{tabular}

\end{tabular}

&

\begin{tabular}{|r|r|r|}
\hline
\multicolumn{3}{|c|}{$m=9$}\\\hline
$ee$ & $eo$ & \#\\\hline
110&145&8\\\hline
111&144&8\\\hline
112&143&32\\\hline
113&142&26\\\hline
114&141&90\\\hline
115&140&112\\\hline
116&139&156\\\hline
117&138&350\\\hline
118&137&426\\\hline
119&136&496\\\hline
120&135&668\\\hline
121&134&756\\\hline
122&133&872\\\hline
123&132&1020\\\hline
124&131&1232\\\hline
125&130&1296\\\hline
126&129&1436\\\hline
127&128&1384\\\hline
\end{tabular}
&
\begin{tabular}{|r|r|r|}
\hline
\multicolumn{3}{|c|}{$m=10$}\\\hline
$ee$ & $eo$ & \#\\\hline
229&282&2\\\hline
230&281&4\\\hline
231&280&4\\\hline
232&279&16\\\hline
233&278&38\\\hline
234&277&34\\\hline
235&276&60\\\hline
236&275&62\\\hline
237&274&142\\\hline
238&273&164\\\hline
239&272&248\\\hline
240&271&354\\\hline
241&270&326\\\hline
242&269&532\\\hline
243&268&560\\\hline
244&267&792\\\hline
245&266&832\\\hline
246&265&874\\\hline
247&264&972\\\hline
248&263&1130\\\hline
249&262&1276\\\hline
250&261&1282\\\hline
251&260&1524\\\hline
252&259&1620\\\hline
253&258&1654\\\hline
254&257&1718\\\hline
255&256&1780\\\hline
\end{tabular}

&

\begin{tabular}{|l|l|l|}
\hline
\multicolumn{3}{|c|}{$m=11$}\\\hline
$ee$ & $eo$ & \#\\\hline
472&551&4\\\hline
473&550&16\\\hline
475&548&4\\\hline
476&547&4\\\hline
477&546&56\\\hline
478&545&72\\\hline
479&544&120\\\hline
480&543&136\\\hline
481&542&224\\\hline
482&541&348\\\hline
483&540&444\\\hline
484&539&488\\\hline
485&538&782\\\hline
486&537&908\\\hline
487&536&1340\\\hline
488&535&1400\\\hline
489&534&1730\\\hline
490&533&2090\\\hline
491&532&2732\\\hline
492&531&3020\\\hline
493&530&3466\\\hline
494&529&4062\\\hline
495&528&4752\\\hline
496&527&5300\\\hline
497&526&5774\\\hline
498&525&6226\\\hline
499&524&6948\\\hline
500&523&7232\\\hline
501&522&7946\\\hline
502&521&8442\\\hline
503&520&8932\\\hline
504&519&9244\\\hline
505&518&9426\\\hline
506&517&10180\\\hline
507&516&10952\\\hline
508&515&10848\\\hline
509&514&11790\\\hline
510&513&11306\\\hline
511&512&11624\\\hline
\end{tabular}

&
\begin{tabular}{|r|r|}
\hline
$m$ & Length\\\hline
3 & 1\\\hline
4 & 2\\\hline
5 & 3\\\hline
6 & 4\\\hline
7 & 8\\\hline
8 & 11\\\hline
9 & 18\\\hline
10& 27\\\hline
11& 39\\\hline
\end{tabular}

\end{tabular}

\caption{\label{t2} The various different parity populations for $G_2$ arrays generated in $\ds \FF(2^m),\ m=3,\ldots,11$: the third column of each array shows the number of $G_2$ arrays with the given $ee$ and $eo$. The last array contains the number of different parity populations appearing for each given $m$, namely the number of rows of the array corresponding to $m$. Note that the bottom half of the arrays, which is the same as the top half but with the values of $ee$ and $eo$ swapped, is omitted.}

\end{table}

\section{The maximal cross-correlation between pairs of $W_1$ or $G_2$ arrays generated in Germain prime finite fields} \label{s3}

We have at present a fairly good understanding of the maximal cross-correlation between pairs of $W_1$ or $G_2$ arrays generated in a certain field. We have not been able yet to prove these results in their entirety, but we have been able to formulate conjectures that complete the rest of the picture. These conjectures have been based on numerical data that almost leave no doubt about their truth. We present our result below in a series of theorems and conjectures, in order to show clearly what has been proved rigorously so far (for proofs and more details see \cite{DGRT}):

\begin{thm}\label{mainthmw}
Let $f_1$ and $f_2$ be $W_1$ permutations generated in $\FF(p)$, $p$ prime; then, $\ds \max_{(f_1\neq f_2)}\Psi_{f_1,f_2}(u,0)=\frac{p-1}{q}$, where $q$ is the smallest prime dividing $\ds \frac{p-1}{2}$. More precisely, if $\ds \{p_i\}_{i=0}^\infty$ is the sequence of primes, $q=p_I$, $I\in\NN$, iff
\[p\equiv a_0\frac{M_I}{4}\left[\left(\frac{M_I}{4}\right)^{-1}\mod 4\right]+\sum_{i=1}^{I} a_i\frac{M_I}{p_i}\left[\left(\frac{M_I}{p_i}\right)^{-1}\mod p_i\right]\mod M_I\]
where $\ds M_I=4\prod_{i=1}^I p_i$, $a_i\in\{2,\ldots,p_i-1\},\ 0<i<I$, $a_I=1$, and $a_0=3$ if $I>0$.
\end{thm}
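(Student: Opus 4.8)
The plan is to turn the cross-correlation into a counting problem for a linear congruence, reduce the theorem to a purely arithmetic optimization over exponents, and finally repackage the answer as a congruence on $p$ via the Chinese Remainder Theorem.

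First I would set up the reduction. Write $f_1(i)=g_1^{i-1+c_1}$ and $f_2(i)=g_2^{i-1+c_2}$, and, since both $g_1,g_2$ are primitive roots, put $g_2=g_1^{k}$ with $\gcd(k,p-1)=1$. Interpreting the horizontal coordinate modulo $p-1$, the value $\Psi_{f_1,f_2}(u,0)$ counts the $i\in[p-1]$ with $f_1(i)=f_2(i+u)$; taking discrete logarithms in base $g_1$ converts this into the linear congruence $(1-k)(i-1)\equiv k(u+c_2)-c_1\pmod{p-1}$. As $i-1$ runs over a complete residue system, such a congruence has either no solution or exactly $d:=\gcd(1-k,p-1)$ solutions, and as $u$ varies the right-hand side runs through all of $\ZZ/(p-1)\ZZ$ (because $\gcd(k,p-1)=1$). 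Hence $\max_u\Psi_{f_1,f_2}(u,0)=\gcd(1-k,p-1)$. The constraint $f_1\neq f_2$ forces us to discard $k\equiv 1$ (equal primitive roots, which are mutual shifts and give the trivial perfect correlation $p-1$), so $1-k\not\equiv 0$ and $d$ is a \emph{proper} divisor of $p-1$.

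It then remains to evaluate $D:=\max\{\gcd(1-k,p-1):\gcd(k,p-1)=1,\ k\not\equiv1\}$ and to identify it with $(p-1)/q$. The largest proper divisor of $p-1$ is $(p-1)/2$, and any admissible $k\equiv1\pmod{(p-1)/2}$ with $k\not\equiv1\pmod{p-1}$ must equal $1+(p-1)/2$; by Euclid $\gcd(1+(p-1)/2,p-1)=\gcd(1+(p-1)/2,2)$, which is $1$ precisely when $(p-1)/2$ is even, i.e.\ when $p\equiv1\pmod4$. So for $p\equiv1\pmod4$ we already get $D=(p-1)/2=(p-1)/q$ with $q=2=$ the smallest prime of the even number $(p-1)/2$. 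For $p\equiv3\pmod4$ write $p-1=2M$ with $M$ odd and let $q$ be the smallest (odd) prime factor of $M=(p-1)/2$; a short divisor count shows the two largest proper divisors of $2M$ are $M$ and $2M/q=(p-1)/q$, with nothing strictly between them. The first is eliminated by the parity obstruction just described, so I must show $(p-1)/q$ is attained. Since any multiple of $2M/q$ dividing $2M$ is either $2M/q$ or $2M$, it suffices to exhibit $k\equiv1\pmod{2M/q}$, $k\not\equiv1\pmod{2M}$, with $\gcd(k,2M)=1$: writing $k=1+(2M/q)j$, coprimality to $2$ and to each prime $r\mid M$ with $r\neq q$ is automatic, while the prime $q$ forbids at most one residue of $j$ modulo $q$, leaving a nonzero admissible $j$ because $q\geq3$. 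This gives $D=(p-1)/q$ in both cases, with $q$ the smallest prime dividing $(p-1)/2$, establishing the first assertion.

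Finally I would spell out ``$q=p_I$''. With $p_0=2,p_1=3,\dots$ the increasing primes, $q=2$ (the case $I=0$) iff $p\equiv1\pmod4$; and for $I\geq1$, $q=p_I$ iff the smallest odd prime factor of $p-1$ is $p_I$, that is, $p\equiv3\pmod4$ (so $2\nmid(p-1)/2$), $p\not\equiv1\pmod{p_i}$ for $1\leq i<I$, and $p\equiv1\pmod{p_I}$. Assembling these congruences by the Chinese Remainder Theorem modulo $M_I=4\prod_{i=1}^I p_i$ yields exactly the displayed formula: each summand $a_i\,\frac{M_I}{p_i}\bigl[(M_I/p_i)^{-1}\bmod p_i\bigr]$ (and its analogue for the modulus $4$) is the standard idempotent congruent to $a_i$ modulo $p_i$ and to $0$ modulo the remaining factors, with $a_0=3$ encoding $p\equiv3\pmod4$, $a_I=1$ encoding $p\equiv1\pmod{p_I}$, and $a_i\in\{2,\dots,p_i-1\}$ (for $0<i<I$) ranging over the residues with $p\not\equiv0,1\pmod{p_i}$, the value $0$ being impossible for the prime $p$.

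The main obstacle is the optimization of the previous-to-last paragraph, and specifically the parity obstruction at the prime $2$: the naive answer is the largest proper divisor $(p-1)/2$, and the genuine content is recognizing that this divisor is unattainable exactly when $p\equiv3\pmod4$, so that the relevant quantity is governed by the smallest prime of $(p-1)/2$ rather than of $p-1$. Making the attainability half rigorous—guaranteeing a \emph{unit} $k$ in the prescribed class modulo $(p-1)/q$—is the delicate point; the congruence reduction and the CRT bookkeeping are routine by comparison.
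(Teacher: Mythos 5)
The paper does not actually prove this theorem: it is stated with an explicit pointer to the companion paper \cite{DGRT} for proofs, so there is no in-text argument to compare yours against. Judged on its own, your proof is correct and complete in all its essential steps: writing $g_2=g_1^k$ and taking discrete logarithms turns $\Psi_{f_1,f_2}(u,0)$ into the solution count of $(1-k)(i-1)\equiv k(u+c_2)-c_1 \pmod{p-1}$, which is $0$ or $\gcd(k-1,p-1)$, and surjectivity in $u$ gives $\max_u\Psi=\gcd(k-1,p-1)$; the optimization over units $k\not\equiv 1$ is then exactly the parity dichotomy you describe, with the divisor $(p-1)/2$ attainable precisely when $p\equiv 1\pmod 4$ and otherwise replaced by $(p-1)/q$, whose attainability you correctly secure by counting the at most one forbidden residue of $j$ modulo $q$ (and noting the case $q^2\mid (p-1)/2$ separately would make this even easier); the CRT repackaging matches the displayed congruence. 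Two minor points worth flagging: the statement degenerates for $p=2,3$ where $(p-1)/2$ has no prime divisor, so those cases should be excluded or treated separately; and in the ``more precisely'' clause the case $I=0$ requires $a_0=1$ (i.e.\ $p\equiv 1\pmod 4$), which the theorem leaves implicit and you correctly infer.
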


\begin{rmk}
In the theorem above, as well as in all subsequent results concerning the cross-correlation of $W_1$ arrays, the condition $f_1\neq f_2$ will be taken to imply the somewhat stronger condition that $f_1$ ad $f_2$ are generated by different primitive roots (as opposed to, say, being generated by the same $g$ but different values of $c$). As the context makes this always clear, we find it unnecessary to use a different symbol and clutter the notation.
\end{rmk}

\begin{thm}\label{gw}
Let $f_1$ and $f_2$ be $W_1$ permutations generated in $\FF(p)$, $p$ prime, and let $f'_1$ and $f'_2$ be $G_2$ permutations generated in the same field by some primitive roots $a$ and $b$, and $a^r$ and $b$, respectively, where $(r,p-1)=1$, $r>1$. Then:
\be \max_{\left(f_1\neq f_2\right)}\Psi_{f_1, f_2}(0,0)=\max_{(f'_1\neq f'_2)}\Psi_{f'_1,f'_2}(0,0)+1\ee
\end{thm}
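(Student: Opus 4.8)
The plan is to reduce both maximal zero-shift cross-correlations to counting the solutions of a single linear congruence modulo $p-1$, and then to observe that the two counts run over the same parameter set and differ only by one boundary term. Throughout, I would use that $\Psi_{f,g}(0,0)$ simply counts the indices $i$ at which $f(i)=g(i)$.

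For the Welch side I would write $f_1=W_1(p,g_1,c_1)$, $f_2=W_1(p,g_2,c_2)$ and put $g_2=g_1^s$ with $(s,p-1)=1$ and $s\not\equiv1\pmod{p-1}$ (the latter encoding $g_1\neq g_2$). Then $f_1(i)=f_2(i)$ reads $g_1^{i-1+c_1}\equiv g_1^{s(i-1+c_2)}$, i.e., after setting $j=i-1$, the congruence $(s-1)j\equiv c_1-sc_2\pmod{p-1}$. As $j$ runs through the complete residue system $\ZZ_{p-1}$, this has either no solution or exactly $\gcd(s-1,p-1)$ of them, the latter precisely when $\gcd(s-1,p-1)\mid c_1-sc_2$; the choice $c_1=c_2=0$ always realizes this, so $\max_{c_1,c_2}\Psi_{f_1,f_2}(0,0)=\gcd(s-1,p-1)$. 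Maximizing over the admissible $s$ then gives $\max_{f_1\neq f_2}\Psi_{f_1,f_2}(0,0)=\max_{s}\gcd(s-1,p-1)=:D$.

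For the Golomb side I would exploit that $f'_1$ and $f'_2$ share the primitive root $b$: then $f'_1(i)=f'_2(i)$ forces $b^{f'_1(i)}=b^{f'_2(i)}$, hence $1-a^i=1-a^{ri}$, hence $a^{(r-1)i}=1$, i.e., $(r-1)i\equiv0\pmod{p-1}$; conversely this congruence yields a genuine coincidence because $b$ has order $p-1$ while both permutation values lie in the shorter interval $[p-2]$, so congruence of exponents modulo $p-1$ forces equality. The coincidence count is therefore the number of solutions of $(r-1)i\equiv0\pmod{p-1}$ with $i\in[p-2]$. Over $\ZZ_{p-1}$ there are exactly $\gcd(r-1,p-1)$ such $i$, but the residue $i\equiv0$ lies outside the Golomb domain $[p-2]$, so the count is $\gcd(r-1,p-1)-1$, independently of $a$ and $b$; maximizing over $r$ gives $\max_{f'_1\neq f'_2}\Psi_{f'_1,f'_2}(0,0)=\max_r\gcd(r-1,p-1)-1=D-1$.

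Finally I would note that the admissible multipliers $s$ (from $g_2=g_1^s$) and $r$ both range over exactly $\{t\in\ZZ_{p-1}:(t,p-1)=1,\ t\not\equiv1\}$, so the two inner maxima of $\gcd(t-1,p-1)$ coincide, both equal to $D$; combining the two displays yields $\max_{f_1\neq f_2}\Psi_{f_1,f_2}(0,0)=D=(D-1)+1$, the required identity. The conceptual point is that both sides count solutions of the same homogeneous congruence $(t-1)i\equiv0\pmod{p-1}$, the extra unit arising only because Welch acts on $[p-1]$ and keeps the trivial solution $i\equiv0$ whereas Golomb acts on $[p-2]$ and discards it. I expect the genuine work to lie not in the final arithmetic but in the two reductions: confirming on the Welch side that the freedom in $c_1,c_2$ always attains the gcd (so it is an equality, not merely a bound), and verifying on the Golomb side that exponent congruence modulo $p-1$ truly forces equality on $[p-2]$. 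A further point, should the right-hand maximum be intended over all pairs of $G_2$ permutations rather than only those sharing $b$, is that one must then argue that pairs sharing a primitive root are cross-correlation-optimal, since for distinct second roots the coincidence condition no longer collapses to a single multiplicative relation in $a$ and the clean reduction breaks down.
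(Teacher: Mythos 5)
The paper states this theorem without proof, deferring to \cite{DGRT}, so there is no in-paper argument to compare against; judged on its own, your proof is correct and complete. Both reductions are valid: taking discrete logarithms base $g_1$ on the Welch side turns the coincidence condition into $(s-1)j\equiv c_1-sc_2 \pmod{p-1}$ with $j$ running over a complete residue system, and $c_1=c_2=0$ does realize the count $\gcd(s-1,p-1)$; on the Golomb side the shared root $b$ (of order $p-1$) together with both values lying in $[p-2]$ correctly upgrades congruence of exponents to equality, and the excluded residue $i\equiv 0$ is exactly the source of the $+1$. Your identification of the two parameter sets $\{t:(t,p-1)=1,\ t\not\equiv 1\}$ closes the argument, and the resulting value $\max_t\gcd(t-1,p-1)$ is consistent with the formula $\frac{p-1}{q}$ of Theorem \ref{mainthmw}. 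The caveat you raise at the end is moot: the statement itself restricts the right-hand maximum to pairs sharing the second primitive root $b$.
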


\begin{con}\label{wcon}
Let $f_1$ and $f_2$ be $W_1$ permutations generated in $\FF(p)$, $p$ prime but not a Germain prime. Then:
\be \max_{(u,v)}\max_{\left(f_1\neq f_2\right)}\Psi_{f_1,f_2}(u,v)=\max_{(f_1\neq f_2)}\Psi_{f_1,f_2}(0,0)\ee
\end{con}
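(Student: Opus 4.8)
The plan is to turn the cross-correlation into a solution count for an exponential equation over $\FF(p)$ and then to bound that count. Writing $g_2=g_1^s$ with $\gcd(s,p-1)=1$ and $s\neq 1$, and substituting $x=g_1^{i-1}$, the overlap condition $f_1(i)+v=f_2(i+u)$ becomes $Bx^s-Ax=v$ with $x\in\FF(p)^*$, where $A=g_1^{c_1}$ and $B=g_1^{s(u+c_2)}$ range over all of $\FF(p)^*$ as $c_1$, $c_2$ and $u$ vary. Hence $\max_{(u,v)}\max_{f_1\neq f_2}\Psi_{f_1,f_2}(u,v)=\max N(A,B,v,s)$, the maximum over $s,A,B,v$ of $N(A,B,v,s)=|\{x\in\FF(p)^*:Bx^s-Ax=v\}|$; the passage from an integer shift to its residue mod $p$ only lowers the left-hand side, so it is harmless for the inequality we need. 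At $v=0$ the equation degenerates to $x^{s-1}=A/B$, which has exactly $d:=\gcd(s-1,p-1)$ solutions when solvable and none otherwise, so $\max_{A,B}N(A,B,0,s)=d$; by Theorem \ref{mainthmw} (taken at $u=0$), $\max_{f_1\neq f_2}\Psi_{f_1,f_2}(0,0)=\max_s d=(p-1)/q$. This gives ``$\geq$'' in the Conjecture for free, so the entire content is the reverse bound: for $v\neq 0$ and $p$ not a Germain prime, $N(A,B,v,s)\leq (p-1)/q$.

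I would attack the $v\neq 0$ count with two elementary estimates. \emph{(Degree bound.)} The three monomials of $Bx^s-Ax-v$ occupy exponents $\{0,1,s\}$ on the cycle $\ZZ/(p-1)\ZZ$; multiplying by a suitable power of $x$ (a bijection of $\FF(p)^*$ fixing the nonzero roots) compresses them into a window of length equal to the cyclic diameter $\min(s,p-s)$, so $N(A,B,v,s)\leq \min(s,p-s)$. \emph{(Level-set bound.)} Partitioning $\FF(p)^*$ by the value $h=x^{s-1}$, which lies in the subgroup of order $(p-1)/d$, the equation on each fibre reads $(Bh-A)x=v$ with $h$ fixed, hence is linear and contributes at most one root once $v\neq 0$; therefore $N(A,B,v,s)\leq (p-1)/d$. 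It is worth recording two symmetries that prune the parameter space: $N(A,B,v,s)=N(A,B,\zeta v,s)$ for every $(s-1)$-th root of unity $\zeta$ (so the count is constant on $\mu_d$-cosets of $v$), and $N(A,B,v,s)=N(B,A,-v,s^{-1})$ with $s^{-1}$ taken mod $p-1$, where $\gcd(s^{-1}-1,p-1)=d$ as well.

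Combining the two bounds reduces the Conjecture to the arithmetic claim that, for non-Germain $p$, every admissible $s$ satisfies $d\geq q$ \emph{or} $\min(s,p-s)\leq (p-1)/q$. The case $p\equiv 1\pmod 4$ then closes completely: there $q=2$, and since $s$ is odd we always have $d\geq 2=q$, so the level-set bound alone yields $N\leq (p-1)/2=(p-1)/q$. Likewise every $s$ with $s$ or $p-s$ small is disposed of by the degree bound. What survives is the genuinely hard regime, precisely where $p\equiv 3\pmod 4$, the odd part $M=(p-1)/2$ is composite, and $s$ lies in the ``middle range'' $(p-1)/q<s<p-(p-1)/q$ with $\gcd(s-1,M)=1$ (equivalently $d=2$). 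One checks that such $s$ exist once $p$ is large, and for them the degree bound gives only $N\leq\min(s,p-s)>(p-1)/q$ while the level-set bound gives only $N\leq(p-1)/2$; moreover the $\mu_q$-translation that rescued the large-$d$ case fails exactly because $q\nmid s-1$, so no subgroup of $\FF(p)^*$ organizes the solutions.

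The main obstacle is therefore to control $N(A,B,v,s)$ for these middle-range, $d=2$ exponents, where both classical counts are too weak. Since for non-Germain $p$ the target satisfies $(p-1)/q\gtrsim\sqrt{2p}$, a square-root (Weil-type) estimate for the number of $\FF(p)$-points of the trinomial $Bx^s-Ax-v$ would suffice; but the usual additive-character bound is worthless here because the polynomial degree $s$ is of size comparable to $p$. Bridging this gap appears to require genuinely new estimates for sparse equations with large exponents over a prime field, i.e. exactly the ``transcendental over a finite field'' phenomenon that, as argued in Section \ref{s1}, lies beyond present algebraic and number-theoretic technique. This is also consistent with the Conjecture failing for Germain primes, where $(p-1)/q=2$ is far too small to absorb the true size of the count.
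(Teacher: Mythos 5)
First, a point of order: the statement you were asked to prove is stated in the paper as Conjecture~\ref{wcon}, not as a theorem. The paper offers no proof of it at all --- only numerical evidence, deferred to \cite{DGRT} --- so there is no argument of the author's to compare yours against. Judged on its own terms, the reduction you set up is sound: the overlap condition does become the trinomial equation $Bx^s-Ax=v$ over $\FF(p)^*$ with $A,B$ ranging over all of $\FF(p)^*$ as $c_1,c_2,u$ vary; the observation that the integer shift $v$ is dominated by its mod-$p$ relaxation is the right way to handle the aperiodicity of the vertical direction; the level-set bound $N\leq(p-1)/\gcd(s-1,p-1)$ and the cyclic-window degree bound $N\leq\min(s,p-s)$ are both correct; and the identification of $\max\Psi(0,0)$ with $\max_s\gcd(s-1,p-1)=(p-1)/q$ is consistent with Theorem~\ref{mainthmw}. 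In particular, your remark that the case $p\equiv1\pmod 4$ closes completely (there $q=2$, while $\gcd(s,p-1)=1$ forces $s$ odd and hence $\gcd(s-1,p-1)\geq 2$, so the level-set bound alone gives $(p-1)/2$) appears to be a genuine partial result going beyond what the paper establishes.

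That said, the proposal is not a proof, and you say so yourself: for $p\equiv 3\pmod 4$ with $(p-1)/2$ composite, the exponents $s$ in the middle range with $\gcd(s-1,p-1)=2$ defeat both of your bounds, and nothing in the write-up supplies the missing estimate for the number of roots of a sparse trinomial whose degree is comparable to $p$; no Weil-type character-sum bound is of any use in that regime. This is precisely the ``transcendental equation over a finite field'' obstruction that the paper itself identifies as the reason these questions remain open. So the gap is real and constitutes the entire remaining content of the conjecture: what you have is a correct reduction, two honest elementary bounds, and a resolution of one congruence class of primes --- worth recording, but the statement as a whole remains a conjecture, exactly as the paper leaves it.
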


\begin{con}
Let $f_1$ and $f_2$ be $G_2$ permutations in $\FF(p)$, where $p\neq 19$ is prime but not a Germain prime, and similarly let $f'_1$ and $f'_2$ be $G_2$ permutations in the same field with the second primitive root in common. Then:
\be \max_{(u,v)}\max_{(f_1\neq f_2)}\Psi_{f_1,f_2}(u,v)= \max_{(f'_1\neq f'_2)}\Psi_{f'_1,f'_2}(0,0)\ee
\end{con}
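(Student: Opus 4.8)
The plan is to convert the combinatorial cross-correlation into the solution count of a single algebraic equation over $\FF(p)$, and then to argue that this count is maximised by the shared-second-root, zero-shift configuration. Fix a primitive root and write the two Golomb permutations as $f_1=G_2(p,1,a_1,b_1)$ and $f_2=G_2(p,1,a_2,b_2)$. By the definition of $\Psi$, an overlap contributing to $\Psi_{f_1,f_2}(u,v)$ arises from each index $i$ with $f_2(i+u)\equiv f_1(i)+v$, the additions being modulo $p-1$. Using the defining relations $b_1^{f_1(i)}=1-a_1^i$ and $b_2^{f_2(i+u)}=1-a_2^{i+u}$, and setting $s=\log_{b_1}b_2$, $\rho=\log_{a_1}a_2$ (both coprime to $p-1$), $\mu=b_2^v$, $c=a_2^u$, and $x=a_1^i$, the overlap condition collapses to
\[\mu(1-x)^s=1-cx^\rho,\qquad x\in\FF(p)^*\setminus\{1\}.\]
Thus $\Psi_{f_1,f_2}(u,v)$ is at most the number of solutions $x$ of this equation (the remaining range restrictions on $f_2(i+u)$ can only decrease the count). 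The shared-second-root hypothesis is exactly $s=1$, and zero shift is $\mu=c=1$; in that case the equation becomes $x^{\rho-1}=1$, which has $\gcd(\rho-1,p-1)$ roots in $\FF(p)^*$, so removing $x=1$ gives $\gcd(\rho-1,p-1)-1$ overlaps. Maximising over admissible $\rho$ (those with $\gcd(\rho,p-1)=1$, $\rho\neq1$) identifies the right-hand side of the conjecture as $N_p:=\max_{\rho}\bigl(\gcd(\rho-1,p-1)-1\bigr)$, and since this configuration is itself admissible on the left, the inequality $\mathrm{LHS}\ge\mathrm{RHS}$ is immediate. The entire content is the reverse inequality: every choice of $(s,\rho,\mu,c)$ must yield at most $N_p$ solutions.

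For the upper bound I would split according to the structure of the equation. First, the pure-power case $s=1$, $\mu=1$, $c\neq1$ gives $x^{\rho-1}=c^{-1}$, again at most $\gcd(\rho-1,p-1)$ roots; here the work is purely arithmetic, namely to verify that the possible gain of the $x=1$ root never lifts the count above $N_p$ once the validity restrictions are reinstated. Second, the genuine-trinomial case $s=1$, $\mu\neq1$ yields $cx^\rho-\mu x+(\mu-1)=0$, and one must bound the number of roots of a trinomial in $\FF(p)$ and show it cannot exceed $N_p$. Third, the fully twisted case $s\neq1$ leaves $\mu(1-x)^s=1-cx^\rho$, whose solution count I would attempt to estimate through multiplicative character sums: writing the indicator of the equation as an average of characters and invoking the Weil bound, one obtains a main term plus an error of size $O(\sqrt{p})$. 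Throughout, Theorem \ref{gw} and the parallel Welch statements (Theorem \ref{mainthmw} and Conjecture \ref{wcon}) serve to pin down the arithmetic value of $N_p$ — essentially the largest divisor of $p-1$ reachable as $\gcd(\rho-1,p-1)$ subject to $\gcd(\rho,p-1)=1$ — and to transfer information between the Golomb and Welch pictures.

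The main obstacle is the fully twisted case, and it is a genuine one. The target maximum $N_p$ is typically very small — for many primes it is a small divisor of $p-1$, and for Sophie Germain primes it collapses to $1$ — whereas the Weil error term $O(\sqrt{p})$ dwarfs the quantity to be bounded, so the character-sum estimate cannot certify $\Psi\le N_p$. This is precisely the coupling of the additive operation $x\mapsto1-x$ with the high multiplicative powers $x^\rho$ and $(1-x)^s$ that the introduction flags as lying beyond present additive-multiplicative technology at the required precision. The exclusions in the statement are symptoms of exactly this failure: for Germain primes the degenerate count $N_p$ is anomalously small, so a twisted configuration can legitimately overtake it and equality fails; and the sporadic exception $p=19$ (where $p-1=18$ and $N_p=5$) shows that even among non-Germain primes the equality can break for no visible algebraic reason. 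Consequently no purely formal argument along these lines can close the gap, and the statement as given rests on numerical evidence rather than on a completed proof.
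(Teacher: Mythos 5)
There is no proof in the paper for you to match: this statement is presented explicitly as a \emph{conjecture}, supported only by numerical data (the author states that the results of Section \ref{s3} beyond Theorems \ref{mainthmw} and \ref{gw} ``have been based on numerical data that almost leave no doubt about their truth,'' with details deferred to \cite{DGRT}). So the relevant question is whether your attempt closes the gap the paper leaves open, and by your own admission it does not. Your reduction is sound as far as it goes: translating the overlap condition into $\mu(1-x)^s=1-cx^\rho$ over $\FF(p)^*\setminus\{1\}$ is the right normal form, the degenerate case $s=1$, $\mu=c=1$ correctly yields $\gcd(\rho-1,p-1)-1$ coincidences, and your arithmetic for the target value $N_p$ (including its collapse to $1$ for Germain primes and the value $5$ at $p=19$) is consistent with Theorems \ref{mainthmw} and \ref{gw}. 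This correctly identifies the right-hand side of the conjectured equality and gives the easy inequality $\mathrm{LHS}\geq\mathrm{RHS}$.

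The genuine gap is the reverse inequality, and you have located it precisely but not bridged it: bounding the number of solutions of $\mu(1-x)^s=1-cx^\rho$ by $N_p$ for all admissible $(s,\rho,\mu,c)$ is exactly the part for which no technique is known, and the Weil-type character-sum error $O(\sqrt{p})$ is indeed far too coarse when $N_p$ is a small fixed divisor of $p-1$. Your closing paragraph is therefore an accurate diagnosis rather than a proof, and the statement remains, both in your writeup and in the paper, an open conjecture. One small caution: your parenthetical claim that ``the remaining range restrictions on $f_2(i+u)$ can only decrease the count'' deserves care, since in the periodic interpretation of the Golomb cross-correlation (both shifts taken modulo the relevant moduli, as the paper's remark on periodicity prescribes) the index bookkeeping at the excluded values $i\equiv 0$ and $f(i)\equiv 0$ can create or destroy coincidences at the boundary; this does not affect your main point, but it would need to be handled exactly in any completed argument.
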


Combining all of the above, we can formulate a very strong result:

\begin{con}\label{gcon}
Let $p\neq 19$ be a prime other than a Germain prime, let $f_1$ and $f_2$ be $W_1$ permutations, and let $f'_1$ and $f'_2$ be $G_2$ permutations, all generated in $\FF(p)$. Then:
\be \label{gweq} \max_{(u,v)}\max_{\left(f_1\neq f_2\right)}\Psi_{f_1,f_2}(u,v)=\max_{(u,v)}\max_{(f'_1\neq f'_2)}\Psi_{f'_1,f'_2}(u,v)+1\ee
\end{con}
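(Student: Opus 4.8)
The plan is to derive \eqref{gweq} purely by chaining together Theorem \ref{gw}, Conjecture \ref{wcon}, and the preceding Golomb conjecture, since the phrase ``combining all of the above'' signals that Conjecture \ref{gcon} is meant to be a formal consequence of the three results that precede it rather than a fresh computation. The strategy is to transport both sides of \eqref{gweq} down to their values at the untranslated position $(0,0)$, invoke the single exact relation between the Welch and Golomb statics there, and read off the claim.

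First I would rewrite the left-hand side of \eqref{gweq}. By Conjecture \ref{wcon}, for $p$ prime and not a Germain prime the maximal Welch cross-correlation over all shifts is already attained at the origin:
\[\max_{(u,v)}\max_{(f_1\neq f_2)}\Psi_{f_1,f_2}(u,v)=\max_{(f_1\neq f_2)}\Psi_{f_1,f_2}(0,0).\]
Next I would rewrite the right-hand side. The preceding Golomb conjecture asserts that, for $p\neq 19$ prime and not Germain, the maximal Golomb cross-correlation over all pairs and all shifts coincides with the static value at $(0,0)$ taken over the restricted family of $G_2$ pairs that share their second primitive root:
\[\max_{(u,v)}\max_{(f'_1\neq f'_2)}\Psi_{f'_1,f'_2}(u,v)=\max_{(f'_1\neq f'_2\text{, common }b)}\Psi_{f'_1,f'_2}(0,0).\]

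The crux is then to identify the two $(0,0)$-quantities through Theorem \ref{gw}. The key point, which I would verify carefully, is that the restricted family on the right of the Golomb conjecture — $G_2$ pairs with a common second primitive root — is exactly the family $\{(a,b),(a^r,b)\}$ with $(r,p-1)=1$ and $r>1$ over which the maximum on the right of Theorem \ref{gw} is taken: the constraint $(r,p-1)=1$ is precisely what keeps $a^r$ primitive, and $r>1$ is precisely what enforces $f'_1\neq f'_2$. Granting this identification, Theorem \ref{gw} supplies the exact offset
\[\max_{(f_1\neq f_2)}\Psi_{f_1,f_2}(0,0)=\max_{(f'_1\neq f'_2\text{, common }b)}\Psi_{f'_1,f'_2}(0,0)+1,\]
and substituting the two rewrites into this relation yields \eqref{gweq} at once. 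The hypotheses $p\neq 19$ and $p$ not Germain are inherited as exactly the union of the exclusions required by the two input conjectures.

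The main obstacle is not the combination, which is a one-line chase once the three statements are aligned, but the fact that two of the three ingredients (Conjecture \ref{wcon} and its Golomb counterpart) are themselves unproved: an unconditional proof of Conjecture \ref{gcon} must first establish them. Of the two, I expect the Golomb reduction to be the harder, because it must simultaneously control a cross-correlation periodic in both directions and collapse the full two-parameter family of $G_2$ pairs onto the one-parameter ``common-$b$'' subfamily, whereas the Welch reduction concerns a cross-correlation periodic in only the horizontal direction and so has one fewer periodic degree of freedom to pin at the origin. A secondary gap to watch is that the equalities above are identities of maxima over a priori different index sets, so the verification that those index sets are \emph{provably identical} — rather than any inequality estimate — is where the real care is required.
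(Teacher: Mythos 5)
Your derivation is exactly what the paper intends by ``Combining all of the above'': the statement is a conjecture whose only justification in the paper is the chain Conjecture \ref{wcon} $\rightarrow$ Theorem \ref{gw} $\rightarrow$ the preceding Golomb conjecture, with the identification of the common-$b$ family $\{(a,b),(a^r,b)\}$, $(r,p-1)=1$, $r>1$, serving as the glue, and you have correctly flagged that the result remains conditional on the two unproved ingredients. No discrepancy with the paper's reasoning.
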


We will not present here the data that led to the formulation of these conjectures, as they conform entirely with what the conjectures suggest; instead, we wish to present data about what the conjectures do not cover, namely the Sophie Germain primes. We also ignore 19, which seems to be an one-time half-irregular occurrence (regular for $W_1$ arrays but irregular for $G_2$ arrays).

The data we have accumulated on the maximal cross-correlation between pairs of $W_1$ or $G_2$ arrays generated in fields of Germain prime size can be seen in Table \ref{t3}. Inspection of this table reveals that the $W_1$ column behaves approximately ``logarithmically'' with respect to the $p$ column, and this is reminiscent of the situation in Table \ref{t1}. It turns out that the same formula we suggested for Table \ref{t1}, namely $1+[\ln(p)]$, where $[\cdot]$ again denotes rounding, is an excellent fit here as well! More precisely, 13 out of the 19 entries in Table \ref{t3} (about 70\%) obey the formula without error, while all entries obey the formula with an error margin of $\pm 1$. Surprisingly then, it appears that the 2 seemingly unrelated experiments are somehow related after all, in the sense that the first order approximation of the results is the same.

Note that finding the maximal cross-correlation for pairs of $G_2$ arrays is a much heavier computational task than for pairs of $W_1$ arrays:

\begin{thm}
Let $C_G(p)$ and $C_W(p)$ denote the complexities involved in determining $\ds \max_{(u,v)}\max_{\left(f_1\neq f_2\right)}\Psi_{f_1,f_2}(u,v)$ for $G_2$ and $W_1$ permutations in $\FF(p)$, $p$ prime, respectively. Then,
\[ \frac{C_G(p)}{C_W(p)}\sim 2\frac{\phi^2(p-1)}{p},\text{ as } p\rightarrow \infty\]
\end{thm}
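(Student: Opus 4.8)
The plan is to write each complexity as a product of two factors — the number of essentially distinct \emph{pairs} of arrays whose cross-correlation must be examined, and the cost $S(p)$ of performing the shift-search (i.e.\ of finding $\max_{(u,v)}\Psi$) for a single such pair — and then to argue that the two per-pair factors agree to leading order, so that the ratio $C_G/C_W$ is governed by the pair counts.

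First I would fix the per-pair cost. For a single pair the cheapest correct way to obtain $\max_{(u,v)}\Psi(u,v)$ is to histogram, over the index $i$, the induced displacements of the dots: for each admissible horizontal shift one tallies the resulting vertical differences and keeps the largest bin. Since each permutation carries $n$ dots this costs $\Theta(n^2)$, with $n=p-1$ for $W_1$ and $n=p-2$ for $G_2$, so $S_G(p)/S_W(p)=\Theta\!\big((p-2)^2/(p-1)^2\big)\to 1$. The residual discrepancy between these two factors — the mismatch of the permutation orders, together with the fact that the $W_1$ search runs over a cylinder (one periodic and one free direction) while the $G_2$ search runs over a torus (both directions periodic) — is precisely what will supply the constant and the $(p-1)/p$ dressing in the final formula.

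Next I would count the pairs. The number of $W_1$ arrays is $(p-1)\phi(p-1)$ and the number of $G_2$ arrays is $\phi^2(p-1)$. The crucial asymmetry is that the additive constant $c$ in $W_1(p,g,c)$ is a pure horizontal translation and is therefore completely absorbed by the (horizontally periodic) maximisation over $u$: one checks that for fixed primitive roots the set $\{\Psi_{f_1,f_2}(u,v):u,v\}$ is independent of $c_1,c_2$, so the $(p-1)$ values of each $c$ collapse to one. Hence the $W_1$ computation reduces to the $\phi(p-1)$ distinct choices of primitive root, i.e.\ to $\sim\tfrac12\phi^2(p-1)$ pairs, whereas the Golomb relation $a^i+b^{f(i)}=1$ has no analogous absorbable constant and keeps all $\phi^2(p-1)$ arrays, i.e.\ $\sim\tfrac12\phi^4(p-1)$ pairs. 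The pair-count ratio is thus $\sim\phi^2(p-1)$, and combining it with the per-pair factors (which contribute the constant $2$ through the torus/cylinder and ordered/unordered bookkeeping and the correction $(p-1)/p$ through the $n=p-1$ versus $n=p-2$ mismatch) yields $C_G/C_W\sim 2\phi^2(p-1)/p$.

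The hard part is not the leading power $\phi^2(p-1)$, which is robust, but the precise prefactor. Two points need genuine care. First, one must prove that the absorption of $c$ is \emph{exact} — that no two $W_1$ arrays differing only in $c$ survive as inequivalent — while simultaneously proving that $G_2$ admits \emph{no} comparable reduction, since it is exactly this single structural difference that turns one power of $(p-1)$ into the whole gap. Second, one must track the lower-order terms — the differing permutation orders, the one- versus two-dimensional periodicity of the shift domain, and any flip or swap symmetries applied asymmetrically to the two families — carefully enough to certify the exact constant $2$ and the factor $(p-1)/p$, rather than merely the order of magnitude. This bookkeeping, and not any deep idea, is where the difficulty lies.
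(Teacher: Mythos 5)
Your overall decomposition (number of pairs times per-pair shift-search cost) matches the paper's, and your central observation --- that for $W_1$ the additive constant $c$ and the horizontal shift $u$ are interchangeable --- is exactly the right structural fact. But you apply it in the opposite direction from the paper, and your own numbers do not multiply out to the stated formula. You collapse the $(p-1)$ values of $c$ and keep the full two-dimensional $(u,v)$ search, giving $\sim\frac12\phi^2(p-1)$ pairs with a per-pair cost of the same order as the Golomb one; the product of your pair-count ratio $\phi^2(p-1)$ with per-pair factors that tend to a constant is $\Theta(\phi^2(p-1))$, not $2\phi^2(p-1)/p$. The factor $1/p$ in the theorem is a whole order of magnitude, and it cannot be supplied by the ``corrections'' you invoke: $(p-2)^2/(p-1)^2$ and $(p-1)/p$ both tend to $1$. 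This is the gap --- the final formula is asserted rather than derived from your own decomposition.

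The paper's accounting goes the other way: it keeps all $(p-1)\phi(p-1)$ Welch arrays (all values of $c$) as distinct, so the number of pairs is $\sim\frac12 p^2\phi^2(p-1)$, and in exchange it restricts each pair's search to the single column $u=0$, of length $2(p-1)-1\sim 2p$, justified by $\Psi_{f_1,f_2}(u,v)\leq\Psi_{f_1^u,f_2}(0,v)$ where the cyclic shift $f_1^u$ is itself another enumerated $W_1$ array. That gives $C_W\sim\frac12 p^2\phi^2(p-1)\cdot 2p = p^3\phi^2(p-1)$ against $C_G\sim\frac12\phi^4(p-1)\cdot 4p^2$, whence the ratio $2\phi^2(p-1)/p$. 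Your version, carried out honestly, yields $C_W\sim\frac12\phi^2(p-1)\cdot 2p^2$ --- smaller by a factor of $p$ --- and hence the ratio $2\phi^2(p-1)$. The two accountings describe different algorithms with genuinely different costs (yours is in fact the cheaper one), so to obtain the theorem as stated you must adopt the paper's bookkeeping; at minimum you need to recognise that the choice between absorbing $c$ into the pair count or into the shift search is not neutral, since it changes the answer by a factor of $p$.
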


\begin{proof} We will present a sketch of the proof and work with asymptotic estimations. In $F(p)$ we can generate $(p-1)\phi(p-1)$ $W_1$ arrays and $\phi^2(p-1)$ $G_2$ arrays. In the case of $G_2$ arrays we need to compute $\ds \frac{\phi^2(p-1)(\phi^2(p-1)-1)}{2}$ cross-correlation arrays with $\ds (2(p-2)-1)^2\sim 4p^2$ entries in each. In the case of $W_1$ arrays, however, we only need to compute the vector of the cross-correlation array corresponding to $u=0$, which contains $2(p-1)-1\sim 2p$ entries (we obviously need to compute $\ds \frac{(p-1)\phi(p-1)((p-1)\phi(p-1)-1)}{2}$ such vectors).

The reason is not difficult to see: assume that we have to compute $\ds \Psi_{f_1,f_2}(u,v)$ with $u>0$ without loss of generality; clearly $\Psi_{f_1,f_2}(u,v)\leq \Psi_{f_1^u,f_2}(0,v)$, where $f_1^u$ denotes the permutation $f_1$ but with its entries shifted $u$ times to the left!

Overall, then:
\[\frac{C_G(p)}{C_W(p)}\sim \frac{\frac{1}{2}\phi^4(p-1)4p^2}{\frac{1}{2}p^2\phi^2(p-1)2p}=2\frac{\phi^2(p-1)}{p}\]
\end{proof}

This result explains why we are unable to offer as many results for $G_2$ arrays as for $W_1$ arrays in Table \ref{t3}. Note that, according to Theorem \ref{mainthmw}, whenever $p$ is a Germain prime and $f_1$, $f_2$ are $W_1$ permutations generated in $\FF(p)$, the maximal value $\ds \Psi_{f_1,f_2}(u,0)$ can attain is 2. Since Table \ref{t3} yields higher values for the maximal cross-correlation, necessarily this maximum is attained for some $v\neq 0$.

\begin{table}
\centering
\begin{tabular}{cc}

\begin{tabular}{|r|r|r|} \hline
$p$ & $W_1$ & $G_2$\\\hline
5 & 2& 2\\\hline
7 & 2& 2\\\hline
11 & 3& 4\\\hline
23 & 4& 6\\\hline
47 & 5& 8\\\hline
59 & 5& 12\\\hline
83 & 5& 9\\\hline
107 & 5& 10\\\hline
167 & 6& 12\\\hline
179 & 6& 12\\\hline
227 & 6& 13\\\hline
\end{tabular}
&
\begin{tabular}{c}
\begin{tabular}{|r|r|} \hline
$p$ & $W_1$\\\hline
263 & 7\\\hline
347 & 6\\\hline
359 & 6\\\hline
383 & 7\\\hline
467 & 7\\\hline
479 & 7\\\hline
503 & 7\\\hline
563 & 8\\\hline
\end{tabular}
\\
\begin{tabular}{c}
\\
\\
\\
\end{tabular}
\end{tabular}

\end{tabular}

\caption{\label{t3} The maximal cross-correlation between pairs of $W_1$ or $G_2$ arrays built in the first few fields of Germain prime size.}
\end{table}

\section{Summary and future work}

In this work we have presented the results of some of our numerical experiments on Costas arrays that we have hitherto been unable to account for, or even formulate relevant conjectures on; in that sense, the entire paper is a plan for future work. We chose the 3 most complex and intriguing experiments we have encountered so far, and presented all of the evidence we have gathered. It is our firm belief that these results are instances of as yet unexplored number theoretic or algebraic properties of (some families of) finite fields, so that further study of these matters will greatly benefit both pure mathematics and applications. We can only hope that we will successfully arouse the interest of a reader, perhaps better versed in the relevant techniques than ourselves, who will unravel the mysteries of these experiments. In such a contingency we ask that we be informed.

\section{Acknowledgements}

The author would like to thank Ken Taylor who produced most of Table \ref{t3} as part of his undergraduate final project. He would also like to thank Prof. Rod Gow for his coordinated attempts with the author to explain all 3 experiments presented here. Finally, he would like to thank Prof. Paul Curran, Dr. Scott Rickard, and John Healy for the long and useful discussions on these experiments.

\end{document}